\documentclass[12pt,reqno,a4paper]{article}
\usepackage[margin=1in]{geometry}
\usepackage{amsmath,amssymb,amsthm,mathtools}
\usepackage{enumitem}
\usepackage[colorlinks=true,linkcolor=blue,citecolor=blue,urlcolor=blue]{hyperref}
\usepackage{authblk}
\usepackage{orcidlink}

\newcommand{\Z}{\mathbb{Z}}
\newcommand{\Zi}{\Z[i]}
\newcommand{\Rbullet}{\Zi\setminus\{0\}} 
\newcommand{\Units}{\mathcal{U}}
\newcommand{\Supp}{\operatorname{Supp}}

\newcommand{\Pe}{\mathcal{P}}

\newcommand\blfootnote[1]{%
  \begingroup
  \renewcommand\thefootnote{}\footnote{\hspace{-1.8em}#1}%
  \addtocounter{footnote}{-1}%
  \endgroup
}

\theoremstyle{plain}
\newtheorem{theorem}{Theorem}[section]
\newtheorem{proposition}[theorem]{Proposition}
\newtheorem{lemma}[theorem]{Lemma}
\newtheorem{corollary}[theorem]{Corollary}
\newtheorem*{problem}{Problem}

\theoremstyle{definition}
\newtheorem{definition}[theorem]{Definition}
\newtheorem{example}[theorem]{Example}

\theoremstyle{remark}
\newtheorem{remark}[theorem]{Remark}

\title{{A Coprimality Topology on the Gaussian Integers: Kolmogorov Quotient and Gaussian Prime Density}}

\author{
    Souvik Mandal$^{\,a}$ 
    \\
    \footnotesize\itshape $^{a}$Department of Mathematics, Indian Institute of Technology Madras, \\ 
    \footnotesize\itshape Chennai-600036, Tamil Nadu, India\\
    }

\date{}
\begin{document}
\maketitle
\blfootnote{$^{*}$\textit{Email addresses:} \texttt{ma22d014@smail.iitm.ac.in, ssouvik.xyz@gmail.com.}}
 \vspace{-3.5em}
\begin{abstract}
\noindent This article investigates the coprimality topology on the set of non-zero Gaussian integers, $\mathbb{Z}[i]\setminus\{0\}$, generated by the arithmetic basis $\sigma_\alpha=\{\beta\neq 0 : \gcd(\alpha,\beta)\sim 1\}$. By analyzing prime supports up to associates, we establish that two points are topologically indistinguishable if and only if their supports coincide. We demonstrate that the space is both hyperconnected and ultraconnected, and we explicitly characterize its Kolmogorov quotient $X$ as the space of finite subsets of associate classes of Gaussian primes, endowed with the basis $\mathcal{O}_F=\{S\in X:S\cap F=\emptyset\}$ for $F\in X$. Finally, we demonstrate that the set of Gaussian primes is dense in $\mathbb{Z}[i]\setminus\{0\}$ with respect to the coprimality topology, thereby establishing a topological proof of the infinitude of Gaussian primes.
\end{abstract}

\vspace{-0.5em}
\begin{center}
\begin{minipage}{0.845\textwidth}
    \footnotesize
    \begin{list}{}{%
        \leftmargin=4.3em 
        \labelwidth=5em
        \labelsep=0pt \parsep=0pt \topsep=0pt \itemsep=0pt
    }
        \item[\textit{Keywords:}\hfill]Gaussian integers, coprimality topology, Hyperconnectedness, Ultraconnectedness, Kolmogorov quotient,
Gaussian primes, Density.
    \end{list}

    \vspace{2pt} 

    \begin{list}{}{%
        \leftmargin=17.8em 
        \labelwidth=18.5em
        \labelsep=0pt \parsep=0pt \topsep=0pt \itemsep=0pt
    }
        \item[2020\hspace{1mm}\textit{Mathematics Subject Classification:}\hfill] Primary 11R04, 54A05;\\ Secondary 11A41, 13G05, 54B15, 54D05, 54D10.
    \end{list}
\end{minipage}
\end{center}
\vspace{1em}
\section{Introduction}
The intersection of topology and number theory has a rich history, most famously inaugurated by Furstenberg's 1955 topological proof of the infinitude of prime numbers \cite{Furstenberg}. Furstenberg's topology on $\mathbb{Z}$, generated by arithmetic progressions, is Hausdorff and metrizable. In contrast, Golomb \cite{Golomb} and Kirch \cite{Kirch} explored topologies on $\mathbb{N}$ that are connected but not Hausdorff, emphasising the arithmetic nature of open sets.

In 2024, Mac\'ias introduced a coarser topology on $\mathbb{N}$ generated by sets of integers coprime to a fixed integer \cite{MaciasIntegers}. This construction was subsequently generalized to any integral domain $R$, yielding what we refer to as the coprimality topology \cite{MaciasDomains}. Within the general theory of integral domains (in particular for infinite PIDs) the coprimality topology is known to be connected and rarely satisfies higher separation axioms.

However, moving from the general theory of integral domains to the specific ring of Gaussian integers $\Zi$ offers structural insights. It is an infinite PID (hence a UFD) with a \emph{nontrivial finite} unit group.
While \cite{MaciasDomains} establishes the topological framework for general integral domains, the arithmetic properties of $\mathbb{Z}[i]$ allow us to replace abstract existential results with explicit, constructive proofs. We exploit the finite unit group and Euclidean structure of the Gaussian integers to sharpen the general theory, contributing to the literature by:

\begin{enumerate}[label=\textup{(\roman*)}]
\item A \emph{prime support} description of basic open sets and closures in $\Rbullet$;
\item A characterization of topological indistinguishability in terms of
associate classes of Gaussian primes;
\item An explicit description of the Kolmogorov ($T_{0}$) quotient of $\Rbullet$ as the space of finite subsets of associate classes of Gaussian primes endowed with a specific basis;
\item A topological proof of the density of Gaussian primes in $\Rbullet$. In contrast to the approach in \cite{MaciasDomains}, which relies on the existence of infinitely many maximal ideals to establish the density and infinitude of primes, we prove the density of Gaussian primes directly using the arithmetic properties of $\mathbb{Z}[i]$, thereby deriving their infinitude as a consequence.
\end{enumerate}

\section{The Coprimality Topology on \texorpdfstring{$\Rbullet$}{}}

\hspace{6mm}Let $\Zi=\{a+bi: a,b\in\Z\}$ be the ring of Gaussian integers equipped with the standard multiplicative norm $N(a+bi)=a^2+b^2$. The group of units is finite, given by $\Units=\{1,-1,i,-i\}$.

Since $\Zi$ is a Euclidean domain (and thus a UFD), every non-zero non-unit Gaussian integer admits a unique factorization into Gaussian primes up to reordering and unit multiples. Two elements $\alpha, \beta \in \Rbullet$ are \emph{associates}, denoted $\alpha \sim \beta$, if $\alpha = u\beta$ for some $u \in \Units$. We write $\gcd(\alpha,\beta)\sim 1$ to indicate that $\alpha$ and $\beta$ are coprime.

To connect the arithmetic structure with the topological construction, we introduce the notion of \emph{prime support}.
\begin{definition}
For any $\alpha \in \Rbullet$, the \emph{prime support}, denoted by $\Supp(\alpha)$, is the finite set of associate classes of Gaussian primes dividing $\alpha$. In particular, if $\alpha$ is a unit, $\Supp(\alpha) = \emptyset$.
\end{definition}

We now define the topology of our interest. For each $\alpha\in\Rbullet$, consider the \emph{basic set}
\[
\sigma_\alpha := \{\beta\in\Rbullet : \gcd(\alpha,\beta)\sim 1\}.
\]
In terms of prime support, $\beta \in \sigma_\alpha$ if and only if $\Supp(\alpha) \cap \Supp(\beta) = \emptyset$.

\begin{definition}
The \emph{coprimality topology} $\mathcal{T}$ on $\Rbullet$ is the topology generated by the collection $\mathcal{B} = \{ \sigma_\alpha : \alpha \in \Rbullet \}$. We refer to the pair $(\Rbullet,\mathcal{T})$ as the \emph{coprimality space} of $\Zi$.
\end{definition}

\begin{proposition}\label{prop:basis}
The collection $\mathcal{B}$ forms a basis for $\mathcal{T}$. Moreover, for all $\alpha,\beta\in\Rbullet$,
\[
\sigma_\alpha\cap\sigma_\beta=\sigma_{\alpha\beta}.
\]
\end{proposition}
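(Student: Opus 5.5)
The plan is to prove the intersection identity $\sigma_\alpha\cap\sigma_\beta=\sigma_{\alpha\beta}$ first, and then deduce the basis property from it by the standard criterion for a collection of subsets to be a basis of some topology.

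\textbf{The identity.} I will use the prime-support reformulation: $\gamma\in\sigma_\alpha$ if and only if $\Supp(\alpha)\cap\Supp(\gamma)=\emptyset$. The key arithmetic input is the equality
\[
\Supp(\alpha\beta)=\Supp(\alpha)\cup\Supp(\beta)\qquad(\alpha,\beta\in\Rbullet),
\]
which follows from the fact that a Gaussian prime $\pi$ satisfies $\pi\mid\alpha\beta$ if and only if $\pi\mid\alpha$ or $\pi\mid\beta$. This is Euclid's lemma in the PID $\Zi$. Divisibility by $\pi$ depends only on its associate class, so the equality holds at the level of associate classes. The unit case is consistent with this, since a product is a unit exactly when both factors are. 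Given this, for $\gamma\in\Rbullet$ we have $\gamma\in\sigma_{\alpha\beta}$ if and only if $(\Supp(\alpha)\cup\Supp(\beta))\cap\Supp(\gamma)=\emptyset$. This in turn holds if and only if both $\Supp(\alpha)\cap\Supp(\gamma)$ and $\Supp(\beta)\cap\Supp(\gamma)$ are empty, that is, if and only if $\gamma\in\sigma_\alpha\cap\sigma_\beta$. Note that $\alpha\beta\neq 0$ because $\Zi$ is an integral domain, so $\sigma_{\alpha\beta}$ really belongs to $\mathcal{B}$.

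\textbf{The basis property.} A collection $\mathcal{B}$ of subsets of a set $Y$ is a basis for the topology it generates provided two conditions hold. First, $\mathcal{B}$ must cover $Y$. Here $1\in\Rbullet$ and $\Supp(1)=\emptyset$, so $\sigma_1=\Rbullet\in\mathcal{B}$, and covering is immediate. Second, the intersection of any two members of $\mathcal{B}$ must be a union of members of $\mathcal{B}$. By the identity just proved, $\sigma_\alpha\cap\sigma_\beta$ is itself a single member of $\mathcal{B}$, so this holds trivially. It follows that the topology generated by $\mathcal{B}$ as a subbasis coincides with the one having $\mathcal{B}$ as a basis: finite intersections of subbasic sets add nothing new.

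The only substantive step is the support identity $\Supp(\alpha\beta)=\Supp(\alpha)\cup\Supp(\beta)$, specifically the primality of Gaussian primes, meaning that irreducibles are prime in a UFD. I will cite it from the Euclidean structure of $\Zi$ rather than reprove it. Everything else is a formal set-theoretic verification.
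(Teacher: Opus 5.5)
Your proposal is correct and essentially the paper's proof: the paper also gets covering from $\sigma_1=\Rbullet$ and proves $\sigma_\alpha\cap\sigma_\beta=\sigma_{\alpha\beta}$ from prime factorization in the UFD $\Zi$. Your identity $\Supp(\alpha\beta)=\Supp(\alpha)\cup\Supp(\beta)$ is only a repackaging of the paper's direct gcd argument, though you state the Euclid's-lemma step and the check $\alpha\beta\neq 0$ more explicitly.
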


\begin{proof}
First, $\sigma_1 = \Rbullet$ implies that $\mathcal{B}$ covers the space.
For the intersection property, let $\gamma \in \sigma_\alpha \cap \sigma_\beta$. By definition, $\gcd(\gamma, \alpha) \sim 1$ and $\gcd(\gamma, \beta) \sim 1$. Since $\Zi$ is a UFD, this implies $\gcd(\gamma, \alpha\beta) \sim 1$, so $\gamma \in \sigma_{\alpha\beta}$.

Conversely, if $\gamma \in \sigma_{\alpha\beta}$, then no prime factor of $\gamma$ divides $\alpha\beta$. Consequently, $\gamma$ shares no prime factors with $\alpha$ nor with $\beta$, implying $\gamma \in \sigma_\alpha \cap \sigma_\beta$.
\end{proof}

\begin{remark}\label{rem:support-dependence}
The basic open sets depend solely on the prime factors of the index. Specifically, $\sigma_\alpha = \sigma_\beta$ if and only if $\Supp(\alpha) = \Supp(\beta)$. This observation is crucial because each Gaussian prime has four associates, yet the topology is insensitive to this choice of representative.
\end{remark}

\begin{example}
Consider the Gaussian prime $1+i$, which has norm $2$. A Gaussian integer $a+bi$ is divisible by $1+i$ if and only if $a\equiv b\pmod 2$. Thus, the basic open set
\[
\sigma_{1+i}=\{a+bi\in\Rbullet : a\not\equiv b\pmod 2\}
\]
consists of all Gaussian integers with mixed parity. Furthermore, since $2 \sim (1+i)^2$, we have $\Supp(2)=\Supp(1+i)$, and therefore $\sigma_2=\sigma_{1+i}$.
\end{example}

\section{Separation and Connectivity Properties}
In this section, we characterize topological indistinguishability via prime support, which implies that the coprimality space of $\Zi$ fails to satisfy the $T_{0}$ axiom. Subsequently, we establish that the space is hyperconnected and ultraconnected.
\subsection{Separation and Arithmetic Characterization of Indistinguishability}

We recall the following foundational definitions,

\begin{definition}
Let $(X, \tau)$ be a topological space.
\begin{itemize}
    \item Two points $x, y \in X$ are termed \emph{topologically indistinguishable} if they belong to precisely the same collection of open sets; that is, for every $U \in \tau$, $x \in U$ holds if and only if $y \in U$.
    \item The space $X$ is said to satisfy the \emph{$T_0$ separation axiom} (or is classified as a \emph{Kolmogorov space}) if no two distinct points are topologically indistinguishable. In other words, the space $X$ is said to be a \emph{Kolmogorov space} if, for any pair of distinct points $x, y \in X$, there exists an open set $U$  of $X$ such that \emph{either} $x \in U$ and $y \notin U$ \emph{or} $y \in U$ and $x \notin U$.
\end{itemize}
\end{definition}

The following theorem establishes interplay between \emph{indistinguishability} and \emph{prime support}. 
\begin{theorem}\label{thm:indistinguishable}
Two elements $x, y \in \Rbullet$ are topologically indistinguishable if and only if $\Supp(x) = \Supp(y)$.
\end{theorem}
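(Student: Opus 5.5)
Since $\mathcal{B}$ is a basis for $\mathcal{T}$ (Proposition~\ref{prop:basis}), two points are topologically indistinguishable if and only if they lie in exactly the same basic sets $\sigma_\alpha$. Indeed, every open set is a union of basic sets, so agreement on $\mathcal{B}$ forces agreement on all of $\mathcal{T}$; the converse is trivial. The plan is therefore to reduce the statement to a comparison of the families $\{\alpha : x\in\sigma_\alpha\}$ and $\{\alpha : y\in\sigma_\alpha\}$. By the support description, $x\in\sigma_\alpha$ if and only if $\Supp(\alpha)\cap\Supp(x)=\emptyset$.

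\emph{($\Leftarrow$)} Suppose $\Supp(x)=\Supp(y)$. Then for every $\alpha\in\Rbullet$ we have
\[
x\in\sigma_\alpha \iff \Supp(\alpha)\cap\Supp(x)=\emptyset \iff \Supp(\alpha)\cap\Supp(y)=\emptyset \iff y\in\sigma_\alpha .
\]
So $x$ and $y$ lie in the same basic sets, and by the reduction above they are indistinguishable.

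\emph{($\Rightarrow$)} I will argue by contraposition. Suppose $\Supp(x)\neq\Supp(y)$. Without loss of generality there is an associate class $[\pi]\in\Supp(x)\setminus\Supp(y)$, where $\pi$ is a Gaussian prime. Take $\alpha=\pi$, so that $\Supp(\pi)=\{[\pi]\}$. Then $\pi\nmid y$, so $\gcd(\pi,y)\sim 1$ and $y\in\sigma_\pi$. On the other hand, $\pi\mid x$, so $x\notin\sigma_\pi$. The open set $\sigma_\pi$ therefore separates the two points, and they are not indistinguishable.

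There is no real obstacle here. The only points needing care are the reduction from arbitrary open sets to basic ones, and the fact that an associate class in $\Supp(x)$ may be represented by any associate $\pi$. By Remark~\ref{rem:support-dependence} the choice of representative does not affect $\sigma_\pi$. I would also note the degenerate case in which one of the points is a unit, so its support is empty; the argument above covers it without change.
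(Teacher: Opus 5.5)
Your proposal is correct and follows essentially the same argument as the paper. When the supports coincide, both use the criterion $x\in\sigma_\alpha \iff \Supp(x)\cap\Supp(\alpha)=\emptyset$ to get agreement on all basic (hence all open) sets; otherwise, both separate the points by $\sigma_\pi$ for a prime class $[\pi]\in\Supp(x)\setminus\Supp(y)$, and your explicit justification of the reduction from arbitrary open sets to basic ones is a small addition that the paper leaves implicit.
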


By Remark~\ref{rem:support-dependence}, for any $\alpha \in \Rbullet$, we have $x \in \sigma_\alpha$ \emph{if and only if}  $\Supp(x) \cap \Supp(\alpha) = \emptyset$; an identical characterization applies to $y$ as well. Consequently, the equality $\Supp(x) = \Supp(y)$ implies that $x$ and $y$ belong to precisely the same collection of basic open sets and, by extension, to the same open sets.

Conversely, suppose $\Supp(x) \neq \Supp(y)$, and let $[\pi]$ be a Gaussian prime class in the symmetric difference of their supports. Assume, without loss of generality, that $[\pi] \in \Supp(x) \setminus \Supp(y)$. It follows that $y \in \sigma_\pi$ while $x \notin \sigma_\pi$, which establishes that $x$ and $y$ are topologically distinguishable.

\begin{corollary}
All four units  $\Units=\{1,-1,i,-i\}$ are topologically indistinguishable. Hence, The space $(\Rbullet,\mathcal{T})$ is not a \emph{Kolmogorov $(T_{0})$ space}.
\end{corollary}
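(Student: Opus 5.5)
The corollary follows directly from Theorem~\ref{thm:indistinguishable}, so the plan is simply to compute prime supports and apply it. Each of the four units $1,-1,i,-i$ is a unit of $\Zi$, so no Gaussian prime divides it. By the definition of prime support, $\Supp(u)=\emptyset$ for every $u\in\Units$. Hence any two units $u,v\in\Units$ satisfy $\Supp(u)=\Supp(v)$, and Theorem~\ref{thm:indistinguishable} gives that they are topologically indistinguishable.

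One can also see this without the theorem. For each basic set $\sigma_\alpha$ and each unit $u$, we have $\gcd(\alpha,u)\sim 1$, so $u\in\sigma_\alpha$. Thus every nonempty basic open set, and hence every nonempty open set, contains all of $\Units$. The empty open set contains none of them. So the units lie in exactly the same open sets.

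For the failure of $T_0$, note that $1$ and $i$ are distinct points of $\Rbullet$ that are topologically indistinguishable. This directly violates the Kolmogorov condition: there is no open $U$ containing exactly one of $1$ and $i$. Therefore $(\Rbullet,\mathcal{T})$ is not a $T_0$ space.

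There is essentially no obstacle here. The only point to be careful about is that the units are genuinely distinct elements of $\Rbullet$, which holds because $\Units$ has four elements. More generally, $\alpha$ and $u\alpha$ are distinct but indistinguishable for every $\alpha\in\Rbullet$ and every unit $u\neq1$.
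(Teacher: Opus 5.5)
Your proposal is correct and takes the paper's route: the corollary is read off from Theorem~\ref{thm:indistinguishable} using $\Supp(u)=\emptyset$ for every $u\in\Units$. Your direct check that every $\sigma_\alpha$ contains $\Units$ is the content of Lemma~\ref{lem:units-in-open}, and your remark that $\alpha$ and $u\alpha$ are distinct but indistinguishable is a correct, slightly more general witness to the failure of $T_0$.
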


In addition to Theorem~\ref{thm:indistinguishable}, we establish an explicit characterization for the closures of elements in $\Rbullet$.
\begin{proposition}\label{prop:closure-support}
For $x,y\in\Rbullet$,
\[
y\in \overline{\{x\}}
\quad\Longleftrightarrow\quad
\Supp(x)\subseteq \Supp(y).
\]
\end{proposition}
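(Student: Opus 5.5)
The plan is to use the standard characterization of closure via a basis: $y\in\overline{\{x\}}$ if and only if every basic open set containing $y$ also contains $x$. By Proposition~\ref{prop:basis}, $\mathcal{B}$ is a basis for $\mathcal{T}$, so it suffices to test membership against the sets $\sigma_\alpha$. By Remark~\ref{rem:support-dependence}, membership in $\sigma_\alpha$ depends only on supports:
\[
\beta\in\sigma_\alpha \iff \Supp(\alpha)\cap\Supp(\beta)=\emptyset .
\]
The statement therefore reduces to a set-theoretic claim about finite sets of associate classes.

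($\Leftarrow$) Assume $\Supp(x)\subseteq\Supp(y)$, and let $\sigma_\alpha$ be a basic open set containing $y$. Then $\Supp(\alpha)\cap\Supp(y)=\emptyset$. Since $\Supp(x)\subseteq\Supp(y)$, it follows that $\Supp(\alpha)\cap\Supp(x)=\emptyset$, so $x\in\sigma_\alpha$. Hence every basic neighbourhood of $y$ meets $\{x\}$, and $y\in\overline{\{x\}}$.

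($\Rightarrow$) We argue by contrapositive. Suppose $\Supp(x)\not\subseteq\Supp(y)$, and choose a prime class $[\pi]\in\Supp(x)\setminus\Supp(y)$. Then $\Supp(\pi)=\{[\pi]\}$. This set is disjoint from $\Supp(y)$ but meets $\Supp(x)$. Thus $\sigma_\pi$ is an open neighbourhood of $y$ that does not contain $x$, so $y\notin\overline{\{x\}}$.

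No step is a genuine obstacle. The only point needing care is to invoke the neighbourhood characterization of closure with a basis, rather than with arbitrary open sets; this is legitimate because $\mathcal{B}$ is a basis by Proposition~\ref{prop:basis}. The edge case of units is already covered: if $x$ is a unit, then $\Supp(x)=\emptyset$, so the argument gives $\overline{\{x\}}=\Rbullet$, consistent with $\sigma_\alpha\ni x$ for all $\alpha$.
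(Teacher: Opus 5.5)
Your proof is correct and takes essentially the same approach as the paper. You reduce closure membership to the basic sets $\sigma_\alpha$ and use the support criterion for membership. You prove ($\Leftarrow$) directly, and for ($\Rightarrow$) you separate $y$ from $x$ using $\sigma_\pi$ for a class $[\pi]\in\Supp(x)\setminus\Supp(y)$; the only difference is that you phrase this as a contrapositive where the paper argues by contradiction.
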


\begin{proof}
By definition, $y \in \overline{\{x\}}$ if and only if every open neighbourhood of $y$ contains $x$. Since $\mathcal{B}$ constitutes a basis for the topology $\mathcal{T}$, this condition is equivalent to the requirement that for every $\alpha \in \mathbb{Z}[i] \setminus \{0\}$, we must have $y \in \sigma_\alpha$ implies $x \in \sigma_\alpha$.

Following Remark~\ref{rem:support-dependence}, the condition $y \in \sigma_\alpha$ is characterized by the disjointness of their respective supports, namely $\Supp(y) \cap \Supp(\alpha) = \emptyset$. Suppose, on the contrary, assume that $\Supp(x) \not\subseteq \Supp(y)$. We may then choose an associate class of a Gaussian prime $[\pi]$ such that $[\pi] \in \Supp(x) \setminus \Supp(y)$. By setting $\alpha = \pi$, it follows that $y \in \sigma_\pi$ while $x \notin \sigma_\pi$, which contradicts the assumption that $y \in \overline{\{x\}}$.

Conversely, assume that $\Supp(x) \subseteq \Supp(y)$. If $y \in \sigma_\alpha$ for some $\alpha \in \mathbb{Z}[i] \setminus \{0\}$, then the condition $\Supp(y) \cap \Supp(\alpha) = \emptyset$ necessarily implies that $\Supp(x) \cap \Supp(\alpha) = \emptyset$. Consequently, $x \in \sigma_\alpha$ holds for all such $\alpha$, thereby establishing the desired inclusion.
\end{proof}
\begin{remark}\label{rem:specialization}
Proposition~\ref{prop:closure-support} demonstrates that the specialization preorder on $(\mathbb{Z}[i]\setminus\{0\},\mathcal{T})$ is determined by the inclusion of prime supports. Thus, points with larger prime support are topologically more specialized. In particular, the addition of prime factors restricts the collection of basic open neighbourhoods containing a point.
\end{remark}

\subsection{Hyperconnectedness and Ultraconnectedness}

In preparation for establishing the strong connectivity properties of the \emph{coprimality space} of $\Zi$, we recall the following definitions.
\begin{definition}
A topological space $(X, \tau)$ is said to be,
\begin{enumerate}
    \item \emph{Hyperconnected} (or \emph{irreducible}) if every pair of nonempty open sets has a nonempty intersection; that is, for all $U, V \in \tau \setminus \{\emptyset\}$, we have $U \cap V \neq \emptyset$.
    \item \emph{Ultraconnected} if every pair of nonempty closed sets has a nonempty intersection; that is, for any two closed sets $F, G \subseteq X$ such that $F, G \neq \emptyset$, it follows that $F \cap G \neq \emptyset$.
\end{enumerate}
\end{definition}
The following lemma asserts that the units of $\Zi$ belong to every nonempty open set of the \emph{coprimality space} of $\Zi$.
\begin{lemma}\label{lem:units-in-open}
Every nonempty open subset $U$ of $\mathbb{Z}[i] \setminus \{0\}$ contains the group of units $\mathcal{U}$.
\end{lemma}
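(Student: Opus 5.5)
The approach is to reduce to basic open sets and then use the fact that units have empty prime support. Let $U$ be a nonempty open subset of $\Rbullet$ and pick any $x \in U$. By Proposition~\ref{prop:basis}, $\mathcal{B}$ is a basis for $\mathcal{T}$, so there is some $\alpha \in \Rbullet$ with $x \in \sigma_\alpha \subseteq U$. It therefore suffices to show that every basic set $\sigma_\alpha$ contains $\Units$. This step also handles the case $\sigma_\alpha=\emptyset$, since no such basic set can contain $x$.

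For the key step, fix $u \in \Units$. By definition $\Supp(u)=\emptyset$, so $\Supp(u)\cap\Supp(\alpha)=\emptyset$ for every $\alpha\in\Rbullet$. The characterization of $\sigma_\alpha$ in terms of prime supports, stated right after the definition of the basic sets, then gives $u \in \sigma_\alpha$. Equivalently and more directly, $\gcd(\alpha,u)\sim 1$ holds because any common divisor of $\alpha$ and $u$ divides the unit $u$ and is therefore itself a unit. Hence $\Units \subseteq \sigma_\alpha \subseteq U$.

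Alternatively, one can phrase the argument through Proposition~\ref{prop:closure-support}. Since $\Supp(u)=\emptyset\subseteq\Supp(x)$ for every $x$, every $x$ lies in $\overline{\{u\}}$, so every neighbourhood of every $x$ contains $u$. That is exactly the claim. I expect no real obstacle here. The only care needed is to choose the basic set $\sigma_\alpha$ inside $U$ so that it contains the chosen point $x$, which is what makes it nonempty and usable.
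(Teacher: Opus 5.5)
Your proof is correct and essentially the paper's own argument: choose a basic open set $\sigma_\alpha\subseteq U$ and note that every unit is coprime to every $\alpha$, so $\Units\subseteq\sigma_\alpha\subseteq U$. Your alternative via Proposition~\ref{prop:closure-support} is also valid, and it is not circular because that proposition precedes the lemma.
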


\begin{proof}
By the definition of a basis, the nonempty open set $U$ must contain a basic open set $\sigma_{\alpha}$ for some $\alpha \in \mathbb{Z}[i] \setminus \{0\}$. Since every unit $u \in \mathcal{U}$ is coprime to all elements in $\mathbb{Z}[i] \setminus \{0\}$, the condition $\gcd(u, \alpha) \sim 1$ is trivially satisfied for any such $\alpha$. It follows that $u \in \sigma_{\alpha} \subseteq U$, which establishes the required inclusion that $\mathcal{U} \subseteq U$.
\end{proof}

We now establish the strong connectivity properties of the \emph{coprimality space} of $\Zi$.
\begin{theorem}\label{thm:hyper-ultra}
The space $(\Rbullet,\mathcal{T})$ is both hyperconnected and ultraconnected.
\end{theorem}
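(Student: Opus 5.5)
The plan is to prove the two properties separately, both resting on the special role of the units.

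\emph{Hyperconnectedness.} Let $U,V$ be nonempty open subsets of $\Rbullet$. By Lemma~\ref{lem:units-in-open}, $\Units\subseteq U$ and $\Units\subseteq V$, so $1\in U\cap V$ and the intersection is nonempty. Alternatively, one could choose basic sets $\sigma_\alpha\subseteq U$ and $\sigma_\beta\subseteq V$; by Proposition~\ref{prop:basis}, $\sigma_\alpha\cap\sigma_\beta=\sigma_{\alpha\beta}$, which contains $1$. Either route suffices.

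\emph{Ultraconnectedness.} I will exhibit a single point lying in every nonempty closed set. Let $F$ be nonempty closed and pick $x\in F$. Since $F$ is closed, $\overline{\{x\}}\subseteq F$. By Proposition~\ref{prop:closure-support}, $y\in\overline{\{x\}}$ if and only if $\Supp(x)\subseteq\Supp(y)$. So it suffices to find one $y$ whose support contains the support of every element. No single element has infinite support, so I will argue differently. Take the complement $X\setminus F$. It is open and, being disjoint from $F$, it is not the whole space.

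The key step is to show that any open set containing a unit equals the whole space, or more directly, that $X\setminus F$ misses certain points. The cleaner approach runs through the complement. If $G$ is another nonempty closed set with $F\cap G=\emptyset$, then $F\subseteq X\setminus G$, where $X\setminus G$ is open. The complements $X\setminus F$ and $X\setminus G$ are open sets whose union is $X$.

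I will instead use Lemma~\ref{lem:units-in-open} dually. Every proper closed set misses the units, since its complement is a nonempty open set and therefore contains $\Units$. This alone does not give a common point, so I will use Proposition~\ref{prop:closure-support} as follows. Given $x\in F$ and $y\in G$, set $z=xy$. Then $\Supp(x)\subseteq\Supp(z)$ and $\Supp(y)\subseteq\Supp(z)$. Hence $z\in\overline{\{x\}}\subseteq F$ and $z\in\overline{\{y\}}\subseteq G$, so $z\in F\cap G$. Here $z\neq0$ because $\Zi$ is a domain.

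The main obstacle was just this step: realizing that the product $xy$ witnesses the intersection through the specialization order described in Remark~\ref{rem:specialization}. After that, everything is routine.
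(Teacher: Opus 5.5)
Your proof is correct and follows the paper's argument: Lemma~\ref{lem:units-in-open} gives hyperconnectedness, and for ultraconnectedness the product $xy$ lies in $\overline{\{x\}}\cap\overline{\{y\}}\subseteq F\cap G$ by Proposition~\ref{prop:closure-support}. You should delete the abandoned detours (the search for a single point common to all closed sets, and the complement discussion), and avoid writing $X$ for $\Rbullet$, since the paper reserves $X$ for $\mathrm{Fin}(\widetilde{\Pe})$.
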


\begin{proof}
Let $U$ and $V$ be any two nonempty open subsets of $\mathbb{Z}[i] \setminus \{0\}$. According to Lemma~\ref{lem:units-in-open}, the group of units $\mathcal{U}$ is contained in every nonempty open set; consequently, $\mathcal{U} \subseteq U \cap V$. Since $\mathcal{U}$ is nonempty, the intersection $U \cap V$ is necessarily nonempty, which satisfies the condition for hyperconnectedness.

 Let $F$ and $G$ denote nonempty closed subsets of $\Rbullet$ and choose elements $x \in F$ and $y \in G$. By the properties of closures, the inclusions $\overline{\{x\}} \subseteq F$ and $\overline{\{y\}} \subseteq G$ hold. We consider the product $xy \in \mathbb{Z}[i] \setminus \{0\}$. In view of Proposition~\ref{prop:closure-support}, the prime support of a product contains the supports of its factors, namely $\Supp(x) \subseteq \Supp(xy)$ and $\Supp(y) \subseteq \Supp(xy)$. These inclusions imply that $xy \in \overline{\{x\}}$ and $xy \in \overline{\{y\}}$. It follows that $xy \in F \cap G$, thereby demonstrating that $F \cap G \neq \emptyset$.
\end{proof}

\section{A Description of the Kolmogorov Quotient}

The coprimality space of $\Zi$ fails to satisfy the $T_{0}$ separation axiom. So we consider its Kolmogorov quotient. Recall that for any topological space $(Y, \tau)$, the \emph{Kolmogorov quotient} is the $T_{0}$ space obtained by identifying topologically indistinguishable points; specifically, it is the quotient space $Y/{\sim}$ where $x \sim y$ if and only if $x$ and $y$ share the same open neighbourhoods. In this section, we provide an explicit description of the Kolmogorov quotient of $\Zi \setminus \{0\}$ by utilising the set of finite subsets of prime associate classes.

Let $\widetilde{\Pe}$ be the set of associate classes of Gaussian primes. For a prime $\pi$, we denote its associate class by $[\pi]$. Let $X = \mathrm{Fin}(\widetilde{\Pe})$ be the collection of all finite subsets of $\widetilde{\Pe}$. For any finite set $F \in X$, we define the set
\[
\mathcal{O}_F := \{S \in X : S \cap F = \emptyset\}.
\]
Let $\mathcal{T}_X$ be the topology on $X$ generated by the collection $\mathcal{C} = \{\mathcal{O}_F : F \in X\}$.
\begin{lemma}\label{prop:basis-quotient}
The collection $\mathcal{C} = \{\mathcal{O}_{F} : F \in X\}$ constitutes a basis for the topology $\mathcal{T}_{X}$ on $X$.
\end{lemma}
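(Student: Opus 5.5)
To show that $\mathcal{C}$ is a basis for $\mathcal{T}_X$, I will verify the two standard basis axioms for a collection of subsets of $X$. Since $\mathcal{T}_X$ is by definition generated by $\mathcal{C}$, these two axioms are exactly what is needed: once they hold, the topology generated by $\mathcal{C}$ as a subbasis has $\mathcal{C}$ itself as a basis. The argument mirrors Proposition~\ref{prop:basis}, with finite sets of prime classes in place of Gaussian integers and union of sets in place of products.

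\emph{Covering.} I take $F=\emptyset\in X$. Every $S\in X$ satisfies $S\cap\emptyset=\emptyset$, so $\mathcal{O}_\emptyset=X$. This is the analogue of $\sigma_1=\Rbullet$.

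\emph{Intersection.} I will prove the identity
\[
\mathcal{O}_F\cap\mathcal{O}_G=\mathcal{O}_{F\cup G}\qquad(F,G\in X),
\]
which is stronger than the basis condition requires. First, $F\cup G$ is a finite union of finite subsets of $\widetilde{\Pe}$, so $F\cup G\in X$ and $\mathcal{O}_{F\cup G}\in\mathcal{C}$. The identity itself is the elementary set equivalence
\[
S\cap(F\cup G)=\emptyset \iff S\cap F=\emptyset \text{ and } S\cap G=\emptyset,
\]
which follows from distributivity: $S\cap(F\cup G)=(S\cap F)\cup(S\cap G)$. Given any $S\in\mathcal{O}_F\cap\mathcal{O}_G$, the basic set $\mathcal{O}_{F\cup G}$ then contains $S$ and lies inside the intersection, so the second basis axiom holds.

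Neither step presents a real obstacle. The only point needing care is closure of $X$ under finite unions, which guarantees that the index $F\cup G$ stays in $X$.
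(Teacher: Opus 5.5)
Your proposal is correct and takes essentially the same approach as the paper: covering via $\mathcal{O}_{\emptyset}=X$, and the intersection identity $\mathcal{O}_{F}\cap\mathcal{O}_{G}=\mathcal{O}_{F\cup G}$, using that $F\cup G$ is again a finite set in $X$.
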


\begin{proof}
First, observe that $X = \mathcal{O}_{\emptyset} \in \mathcal{C}$, ensuring that the collection covers the space. To establish the intersection property, let $\mathcal{O}_{F_{1}}, \mathcal{O}_{F_{2}} \in \mathcal{C}$ for some finite $F_{1}, F_{2} \subseteq \mathcal{P}_{e}$. A set $S \in X$ satisfies $S \in \mathcal{O}_{F_{1}} \cap \mathcal{O}_{F_{2}}$ if and only if $S \cap F_{1} = \emptyset$ and $S \cap F_{2} = \emptyset$, which is equivalent to $S \cap (F_{1} \cup F_{2}) = \emptyset$. Since $F_{1} \cup F_{2}$ is a finite subset of $\widetilde{\Pe}$, it follows that
\[
\mathcal{O}_{F_{1}} \cap \mathcal{O}_{F_{2}} = \mathcal{O}_{F_{1} \cup F_{2}} \in \mathcal{C}.
\]
As the collection is closed under finite intersections and covers $X$, it constitutes a basis.
\end{proof}
\begin{proposition}\label{prop:X-T0}
The topological space $(X, \mathcal{T}_X)$ is a Kolmogorov space; that is, it satisfies the $T_0$ separation axiom.
\end{proposition}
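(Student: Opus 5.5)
To establish that $(X,\mathcal{T}_X)$ satisfies the $T_0$ axiom, I would take two distinct points $S \neq T$ of $X$ and produce a basic open set from $\mathcal{C}$ that contains exactly one of them. By Lemma~\ref{prop:basis-quotient}, $\mathcal{C}$ is a basis, so it suffices to work with the sets $\mathcal{O}_F$ alone. The argument mirrors the converse direction of Theorem~\ref{thm:indistinguishable}, with finite sets of prime classes now playing the role of prime supports.

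Since $S \neq T$, their symmetric difference is nonempty. Relabelling the two points if necessary, I may assume there is a class $[\pi] \in S \setminus T$. I then take $F = \{[\pi]\}$, which is a finite subset of $\widetilde{\Pe}$ and so lies in $X$. The open set I use is $\mathcal{O}_{\{[\pi]\}}$.

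Two checks remain. First, $T \cap \{[\pi]\} = \emptyset$ because $[\pi] \notin T$, so $T \in \mathcal{O}_{\{[\pi]\}}$. Second, $S \cap \{[\pi]\} = \{[\pi]\} \neq \emptyset$, so $S \notin \mathcal{O}_{\{[\pi]\}}$. Hence $\mathcal{O}_{\{[\pi]\}}$ is an open set containing $T$ but not $S$, which is exactly the $T_0$ condition.

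No step here is a genuine obstacle. The only point needing care is the relabelling: $T_0$ only asks that \emph{one} of the two points be separated from the other, so choosing $[\pi]$ from whichever of $S \setminus T$ or $T \setminus S$ is nonempty covers all cases. I would also note in passing that singletons of classes belong to $X$, so the chosen $F$ is admissible.
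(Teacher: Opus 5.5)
Your proposal is correct and is essentially the paper's proof. You take $[\pi]$ in the symmetric difference, say $[\pi]\in S\setminus T$, and note that the basic open set $\mathcal{O}_{\{[\pi]\}}$ contains $T$ but not $S$.
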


\begin{proof}
To establish that $(X, \mathcal{T}_X)$ is a Kolmogorov space, let $S$ and $T$ be distinct elements of $X$. Since $S, T$ are distinct sets, their symmetric difference $S\triangle T$ is necessarily nonempty. Without loss of generality, we may choose an associate class $[\pi]$ of a Gaussian prime $\pi$ such that $[\pi] \in S \setminus T$.

Consider the basis open set $\mathcal{O}_{\{[\pi]\}} = \{ A \in X : A \cap \{[\pi]\} = \emptyset \}$. By our choice of $[\pi]$, the following conditions hold:
\begin{itemize}
    \item $T \in \mathcal{O}_{\{[\pi]\}}$ since $[\pi] \notin T$;
    \item $S \notin \mathcal{O}_{\{[\pi]\}}$ since $[\pi] \in S$.
\end{itemize}
Thus, there exists an open neighbourhood of $T$ that does not contain $S$, which implies that the space $(X, \mathcal{T}_{X})$ is a Kolmogorov space.
\end{proof}

We define the support map $q : \mathbb{Z}[i] \setminus \{0\} \to X$ by $x\mapsto \Supp(x)$. The Theorem~\ref{kolmogorovquotient} establishes that the space $(X, \mathcal{T}_X)$ constitutes a Kolmogorov quotient of $\mathbb{Z}[i] \setminus \{0\}$ under the map $q$. The following Lemma~\ref{T0-quotient} establishes that $q$ is a quotient map.
\begin{lemma}\label{T0-quotient}
The map $q$ defined above is a surjective, continuous open map. Hence, $q$ is a quotient map. Moreover, $q$ identifies the topologically indistinguishable points.
\end{lemma}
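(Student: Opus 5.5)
We verify the four properties of $q$ separately, reducing everything to the identity
\[
q^{-1}(\mathcal{O}_F)=\sigma_{\alpha_F}, \qquad q(\sigma_\alpha)=\mathcal{O}_{\Supp(\alpha)},
\]
where for $F=\{[\pi_1],\dots,[\pi_k]\}\in X$ we set $\alpha_F:=\pi_1\cdots\pi_k$, with $\alpha_\emptyset=1$.

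\emph{Surjectivity.} Every $F\in X$ is a finite set of associate classes. By unique factorization in $\Zi$, the product $\alpha_F$ has $\Supp(\alpha_F)=F$. Hence $q(\alpha_F)=F$.

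\emph{Continuity.} Since $\mathcal{C}$ is a basis for $\mathcal{T}_X$ (Lemma~\ref{prop:basis-quotient}), it suffices to compute preimages of basic sets. For $x\in\Rbullet$ we have $q(x)\in\mathcal{O}_F$ iff $\Supp(x)\cap F=\emptyset$. Since $F=\Supp(\alpha_F)$, this holds iff $x\in\sigma_{\alpha_F}$. Thus $q^{-1}(\mathcal{O}_F)=\sigma_{\alpha_F}$, which is open.

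\emph{Openness.} Images commute with unions, so it suffices to show that $q(\sigma_\alpha)=\mathcal{O}_{\Supp(\alpha)}$.
\begin{itemize}[label={}]
\item[($\subseteq$)] If $x\in\sigma_\alpha$, then $\Supp(x)\cap\Supp(\alpha)=\emptyset$, so $q(x)\in\mathcal{O}_{\Supp(\alpha)}$.
\item[($\supseteq$)] This is the step needing an actual construction, and it is the only mild obstacle. Given $S\in\mathcal{O}_{\Supp(\alpha)}$, take $x=\alpha_S$. Then $\Supp(x)=S$, which is disjoint from $\Supp(\alpha)$. Hence $x\in\sigma_\alpha$ and $q(x)=S$.
\end{itemize}
Therefore $q$ maps basic open sets to basic open sets, and so it is open.

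A continuous, open surjection is a quotient map. Indeed, if $q^{-1}(V)$ is open, then $V=q(q^{-1}(V))$ by surjectivity, and this set is open because $q$ is open.

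\emph{Identification of indistinguishable points.} By Theorem~\ref{thm:indistinguishable}, $x$ and $y$ are topologically indistinguishable iff $\Supp(x)=\Supp(y)$, that is, iff $q(x)=q(y)$. So the fibres of $q$ are exactly the indistinguishability classes. Combined with Proposition~\ref{prop:X-T0}, this exhibits $X$ as the Kolmogorov quotient.
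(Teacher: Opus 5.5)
Your proof is correct and follows essentially the same route as the paper. You use the same witness $\prod \pi_j$ (with $1$ for $\emptyset$) for surjectivity, the same identities $q^{-1}(\mathcal{O}_F)=\sigma_\alpha$ and $q(\sigma_\alpha)=\mathcal{O}_{\Supp(\alpha)}$ for continuity and openness, and Theorem~\ref{thm:indistinguishable} for the fibres; you also spell out the reverse inclusion $\mathcal{O}_{\Supp(\alpha)}\subseteq q(\sigma_\alpha)$ and why a continuous open surjection is a quotient map, both of which the paper leaves implicit.
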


\begin{proof}
 In order to verify the surjectivity of $q$, we consider an arbitrary element $S = \{[\pi_1], \dots, [\pi_n]\} \in X$ consisting of associate classes of Gaussian primes $\pi_{1},\dots,\pi_{n}$. Consider the product $x = \prod_{j=1}^n \pi_j$, we obtain an element in $\mathbb{Z}[i] \setminus \{0\}$ such that $\Supp(x) = S$. If $S = \emptyset$, we set $x = 1$, yielding $\Supp(1) = \emptyset$. Thus, $q$ is surjective.

In order to verify continuity of the map $q$ it is sufficient to verify the preimage of the basis elements $\mathcal{O}_F$ for $F \in X$ are open in $\Rbullet$. Fix $F = \{[\pi_1], \dots, [\pi_n]\}$ and let $\alpha = \prod_{j=1}^n \pi_j$ (with $\alpha = 1$ if $F = \emptyset$). The preimage is given by,
\[
q^{-1}(\mathcal{O}_F) = \{x \in \mathbb{Z}[i] \setminus \{0\} : \Supp(x) \cap F = \emptyset\} = \sigma_\alpha,
\]
which is a basis open set in the coprimality space of $\Zi$ . Thus, $q$ is a continuous map.

To establish that $q$ is a quotient map, we indeed show that it is an open map. Let $\sigma_\alpha$ be a basic open set in $\mathbb{Z}[i] \setminus \{0\}$. Its image under $q$ is,
\[
q(\sigma_\alpha) = \{ \Supp(x) : \Supp(x) \cap \Supp(\alpha) = \emptyset \} = \{ S \in X : S \cap \Supp(\alpha) = \emptyset \} = \mathcal{O}_{\Supp(\alpha)}.
\]
As $q$ maps a basis element of $\mathbb{Z}[i] \setminus \{0\}$ to the basis element of $X$, it is an open map. Since $q$ is a surjective, continuous, and open map, it follows that $q$ is a quotient map.

By Theorem~\ref{thm:indistinguishable},   $q(x)=q(y)$ if and only if $x$ and $y$ are topologically indistinguishable points. Thus the map $q$ identifies topologically indistinguishable points.
\end{proof}

We denote the Kolmogorov quotient of the coprimality space of $\Zi$ by $\widetilde{\mathbb{Z}[i]\setminus\{0\}}$. The following Theorem~\ref{kolmogorovquotient} describes $\widetilde{\Rbullet}$.

\begin{theorem}\label{kolmogorovquotient}
   The Kolmogorov quotient $\widetilde{\Rbullet}$ of $\Rbullet$ is homemorphic to the Kolmogorov space $(X,\mathcal{T}_{X})$. 
\end{theorem}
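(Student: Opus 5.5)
The plan is to use the universal description of the Kolmogorov quotient: $\widetilde{\Rbullet}=\Rbullet/{\approx}$, where $x\approx y$ means topological indistinguishability, carrying the quotient topology. Let $p:\Rbullet\to\widetilde{\Rbullet}$ be the canonical projection. By Theorem~\ref{thm:indistinguishable}, $x\approx y$ holds if and only if $\Supp(x)=\Supp(y)$, i.e.\ if and only if $q(x)=q(y)$. So the fibres of $q$ are exactly the equivalence classes of $\approx$, and $q$ factors uniquely as $q=h\circ p$ for a well-defined map $h:\widetilde{\Rbullet}\to X$, $h([x])=\Supp(x)$.

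I then check that $h$ is a homeomorphism in three steps. First, $h$ is injective: $h([x])=h([y])$ gives $\Supp(x)=\Supp(y)$, hence $[x]=[y]$ by Theorem~\ref{thm:indistinguishable}. It is surjective because $q$ is surjective (Lemma~\ref{T0-quotient}). Second, $h$ is continuous: for $V\subseteq X$ open, $p^{-1}(h^{-1}(V))=q^{-1}(V)$, which is open by continuity of $q$, so $h^{-1}(V)$ is open by the definition of the quotient topology on $\widetilde{\Rbullet}$. Third, $h$ is open: for $W\subseteq\widetilde{\Rbullet}$ open, surjectivity of $p$ gives $h(W)=h(p(p^{-1}(W)))=q(p^{-1}(W))$. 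Here $p^{-1}(W)$ is open in $\Rbullet$, and $q$ is an open map by Lemma~\ref{T0-quotient}, so $h(W)$ is open. Hence $h$ is a continuous open bijection, i.e.\ a homeomorphism. Alternatively, one can invoke the general fact that two quotient maps with the same fibres induce homeomorphic quotients.

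There is no real obstacle. The only point requiring care is the well-definedness and injectivity of $h$, which rests entirely on the identification of the fibres of $q$ with indistinguishability classes from Theorem~\ref{thm:indistinguishable}. As a consistency check, Proposition~\ref{prop:X-T0} confirms that the target is $T_0$, as a Kolmogorov quotient must be.
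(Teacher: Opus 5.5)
Your proof is correct and follows the paper's approach. Both arguments use Lemma~\ref{T0-quotient}, which says $q$ is a surjective, continuous, open map whose fibres are exactly the indistinguishability classes by Theorem~\ref{thm:indistinguishable}, to get the induced bijection $[x]\mapsto\Supp(x)$ from $\widetilde{\Rbullet}$ to $X$; the paper just cites the universal property of quotients for the homeomorphism, while you explicitly check continuity through the quotient topology and openness through $h(W)=q(p^{-1}(W))$.
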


\begin{proof}
    In view of Lemma~\ref{T0-quotient} and the universal property of quotient topology, the support map $q$ induces a unique homeomorphism 
\[
\widetilde{q} : \widetilde{\mathbb{Z}[i]\setminus\{0\}} \to X
\]
which maps the equivalence class $[x]$ of a non-zero Gaussian integer $x$ to its prime support $\Supp(x)$.
\end{proof}
\section{Density and Infinitude of Gaussian Primes}

In this section, we establish the density of Gaussian primes in the coprimality topology and provide a topological proof of their infinitude. The following Theorem~\ref{thm:dense} establishes that the set of Gaussian primes is dense in the coprimality space of $\Zi$. Recall that $\mathcal{P}$ denotes the set of Gaussian primes. 
\begin{theorem}\label{thm:dense}
The set $\mathcal{P}$ is dense in $(\Rbullet,\mathcal{T})$.
Equivalently, for every $\alpha\in\Rbullet$, the basic open set $\sigma_\alpha$ contains
a Gaussian prime.
\end{theorem}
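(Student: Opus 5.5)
The plan is to establish the ``equivalently'' clause first and then prove the basic-open-set statement by a Euclid-type construction. Since $\mathcal{B}$ is a basis (Proposition~\ref{prop:basis}), a set is dense exactly when it meets every nonempty basic open set. Every $\sigma_\alpha$ is nonempty, because it contains the units (Lemma~\ref{lem:units-in-open}). Hence $\mathcal{P}$ is dense if and only if $\sigma_\alpha\cap\mathcal{P}\neq\emptyset$ for every $\alpha\in\Rbullet$.

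To produce a prime in $\sigma_\alpha$ without assuming that there are infinitely many primes, consider the Gaussian integer $\beta:=1+\alpha\bar\alpha$. Since $\alpha\ne0$, we have $\alpha\bar\alpha=N(\alpha)\ge1$, so $\beta$ is a rational integer with $\beta\ge 2$. Its norm is $N(\beta)=\beta^2\ge 4>1$, so $\beta$ is neither zero nor a unit. By unique factorization in $\Zi$, $\beta$ has some Gaussian prime divisor $\pi$.

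Next check that $\pi$ is coprime to $\alpha$. Suppose $\pi\mid\alpha$. Then $\pi\mid\alpha\bar\alpha$, and therefore $\pi\mid\beta-\alpha\bar\alpha=1$. This would make $\pi$ a unit, which is a contradiction. So $[\pi]\notin\Supp(\alpha)$. Because $\Supp(\pi)=\{[\pi]\}$, this gives $\Supp(\pi)\cap\Supp(\alpha)=\emptyset$, i.e.\ $\pi\in\sigma_\alpha$.

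For the unit case, note that if $\alpha$ is a unit then $\sigma_\alpha=\Rbullet$, and the construction still works: $\beta=2$ and $\pi=1+i$. Using $1+\alpha\bar\alpha$ rather than $1+\alpha$ is a deliberate choice, because it guarantees $\beta$ is a non-unit. By contrast, $1+\alpha$ could be $0$ or a unit, for instance when $\alpha=-1$ or $\alpha=i-1$.

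The main obstacle is exactly this step of ensuring $\beta$ is a nonzero non-unit; the norm argument ($\beta^2\geq 4$) handles it. The infinitude corollary will then follow. If $\mathcal{P}$ had finitely many associate classes, take $\alpha$ to be the product of one representative of each class. Then $\sigma_\alpha$ would contain no prime, contradicting density.
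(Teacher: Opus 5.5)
Your proof is correct. It follows the same Euclid-style idea as the paper: find a prime dividing an element $\equiv 1 \pmod{\alpha}$. The difference is the auxiliary element, and that choice matters.

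The paper uses $\gamma=\alpha+1$. It therefore has to treat units separately. For non-units it must also split off the exceptional case $\alpha+1\in\Units$, i.e.\ $\alpha\in\{-2,\,i-1,\,-1-i\}$, which it settles by checking via norms that the inert prime $3$ is coprime to each of these.

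Your $\beta=1+\alpha\bar\alpha$ is a rational integer $\ge 2$, hence automatically a non-unit. Everything therefore collapses into one uniform argument; for units it simply yields $1+i$.

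The paper's route stays closest to the literal Euclid template. However, its exceptional case can only be dispatched because $\Units$ is finite, so the bad $\alpha$ can be listed. Your route uses only two facts: $\alpha\mid N(\alpha)$, and rational integers of absolute value at least $2$ are non-units. Consequently, with $1+|N(\alpha)|$ in place of $1+N(\alpha)$ (norms can be negative there), it works verbatim in $\mathbb{Z}[\sqrt{2}]$. There $\alpha+1$ is a unit for infinitely many $\alpha$, so the paper's method breaks down, while yours answers the paper's closing Problem affirmatively.
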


\begin{proof}
 It is enough to show that every nonempty basic open set $\sigma_{\alpha}$ contains at least one Gaussian prime. Choose a non-zero Gaussian integer $\alpha$.

If $\alpha$ is a unit, then $\sigma_{\alpha} = \mathbb{Z}[i] \setminus \{0\}$. Since $\mathbb{Z}[i] \setminus \{0\}$ clearly contains Gaussian primes (for instance, $1+i$), the intersection $\sigma_{\alpha} \cap \mathcal{P}$ is nonempty. We therefore assume that $\alpha \notin \mathcal{U}$.

Now consider the element $\gamma = \alpha + 1$. Clearly, $\gamma$ is non-zero Gaussian integer. Now, we consider two cases,

\vspace{1mm}
\emph{Case 1:} If $\gamma \notin \mathcal{U}$, then $\gamma$ necessarily possesses a Gaussian prime divisor $\pi$, as $\gamma$ is a non-zero non-unit. If $\pi$ divides $\alpha$, then $\pi$ would necessarily divide the difference $\gamma - \alpha = 1$, which is impossible as $\pi$ is a prime. Consequently, $\gcd(\pi, \alpha) \sim 1$, implying that $\pi \in \sigma_{\alpha} \cap \mathcal{P}$.

\vspace{1mm}
\emph{Case 2:} $\gamma \in \mathcal{U}$. Given that $\alpha \notin \mathcal{U}$ and $\alpha \neq 0$, the condition $\gamma \in \{1, -1, i, -i\}$ implies $\alpha\in\{-2, i-1, -1-i\}$. For each candidate in this set, we consider the prime $3 \in \mathbb{Z}$. It is straightforward to verify that the integer $3$ remains prime in $\mathbb{Z}[i]$. A direct computation shows,
\begin{itemize}
    \item If $\alpha = -2$, then $N(\alpha) = 4$. Since $\gcd(9, 4) = 1$ in $\mathbb{Z}$, $\gcd(3, -2) \sim 1$ in $\mathbb{Z}[i]$.
    \item If $\alpha = i-1$ , then $N(\alpha) = 2$. Since $\gcd(9, 2) = 1$ in $\mathbb{Z}$, $\gcd(3, i-1) \sim 1$ in $\mathbb{Z}[i]$.
    \item If $\alpha = -1-i$, then $N(\alpha) = 2$. Similarly, $\gcd(3, -1-i) \sim 1$ in $\mathbb{Z}[i]$.
\end{itemize}
In all subcases, $3 \in \sigma_{\alpha} \cap \mathcal{P}$. Consequently, every basic open set $\sigma_{\alpha}$ contains at least one Gaussian prime; that is to say, the set $\mathcal{P}$ is dense in $(\mathbb{Z}[i] \setminus \{0\}, \mathcal{T})$.
\end{proof}

\begin{corollary}
There are infinitely many Gaussian primes.
\end{corollary}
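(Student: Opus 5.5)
I will argue by contradiction, using Theorem~\ref{thm:dense}. Suppose $\mathcal{P}$ were finite. Pick one representative from each associate class and set
\[
\alpha=\prod_{[\pi]\in\widetilde{\Pe}}\pi .
\]
This product is finite, so $\alpha$ is a well-defined non-zero Gaussian integer.

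The next step is to identify $\sigma_\alpha$. Every Gaussian prime divides $\alpha$ up to a unit, so $\Supp(\alpha)=\widetilde{\Pe}$. Hence $\sigma_\alpha=\{\beta : \Supp(\beta)=\emptyset\}=\mathcal{U}$. In particular $\sigma_\alpha$ is a nonempty basic open set, since it contains $1$, and it contains no Gaussian prime. This contradicts Theorem~\ref{thm:dense}, which says every basic open set meets $\mathcal{P}$.

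Topologically, the same argument reads as follows. If $\mathcal{P}$ were finite, then $\mathcal{P}$ would be closed, being the finite union $\bigcup_{\pi\in\mathcal{P}}\{\pi\}$ of sets each contained in the closed set
\[
\Rbullet\setminus\sigma_\pi=\{\beta:\pi\mid\beta\}.
\]
Then $\mathcal{P}\subseteq\Rbullet\setminus\sigma_\alpha$, a proper closed set, which contradicts density.

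The only point needing care is the use of finiteness, which ensures that $\alpha$ exists as an element of $\Rbullet$. I would also check that $1\in\sigma_\alpha$, so that the open set is genuinely nonempty. Since Lemma~\ref{lem:units-in-open} guarantees that units lie in every basic open set, this check is immediate, and I expect no real obstacle.
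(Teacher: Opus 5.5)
Your main argument is correct and is essentially the paper's proof. Under the finiteness assumption, the product $\alpha$ of class representatives has $\Supp(\alpha)=\widetilde{\Pe}$, so $\sigma_\alpha=\mathcal{U}$ is a nonempty basic open set containing no Gaussian prime, which contradicts Theorem~\ref{thm:dense}. In your topological aside, the claim that $\mathcal{P}$ would be closed is unjustified: a set contained in a closed set need not be closed, and here $\overline{\{\pi\}}$ is the set of all multiples of $\pi$, which includes non-primes such as $\pi^2$. The claim is also unnecessary, since $\mathcal{P}\subseteq\Rbullet\setminus\sigma_\alpha$, a proper closed set, already contradicts density.
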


\begin{proof}
Suppose that the set $\mathcal{P}$ of Gaussian primes is finite. It follows that the collection of associate classes $\widetilde{\Pe}$ is finite as well; let these classes be denoted by $\{[\pi_1], \dots, [\pi_n]\}$. Let $\alpha = \prod_{j=1}^n \pi_j$ be the product of representatives from each class.
By construction, the basic open set $\sigma_\alpha$ is non-empty as it contains the unit $1$. 

Due to the Theorem~\ref{thm:dense} $\sigma_\alpha$ must contain at least one Gaussian prime $\pi$.
However, $\pi\in\sigma_\alpha$ implies that $\gcd(\pi,\alpha)\sim 1$. This yields a contradiction, as every Gaussian prime $\pi$ must necessarily divide $\alpha$ under the initial assumption of finiteness. Consequently, we conclude that the set $\mathcal{P}$ of Gaussian primes is infinite.
\end{proof}
\section{Conclusion}

This paper demonstrates that the coprimality topology on $\mathbb{Z}[i] \setminus \{0\}$ turns out to be a rigorous topological framework for characterising the arithmetic properties of the Gaussian integers. We have shown that topological indistinguishability in this space is determined precisely by the equality of Gaussian prime supports, which results in the collapse of the unit group $\mathcal{U}$ into a single indistinguishability class. Consequently, the space fails to satisfy the $T_0$ separation axiom and satisfies strong connectivity properties due to the presence of units in every nonempty open set.

Furthermore, we provided an explicit description of the Kolmogorov quotient of the coprimality space of $\Zi$. Finally, our density argument establishes that the set of Gaussian primes $\mathcal{P}$ is dense in $(\mathbb{Z}[i] \setminus \{0\}, \mathcal{T})$. This result provides an alternative topological proof for the infinitude of Gaussian primes while highlighting the necessity of considering the ring's specific unit structure, particularly in the analysis of the element $\alpha+1$ in Theorem~\ref{thm:dense}.

A natural direction for future research is to extend this framework to other unique factorization domains with different arithmetic properties. Consider, for instance, the ring $\mathbb{Z}[\sqrt{2}]=\{a+b\sqrt{2} \mid a, b\in\mathbb{Z}\}$. Like $\Zi$, it is a Euclidean domain; however, unlike $\Zi$, its group of units is infinite. This structural difference presents a non-trivial challenge for the density argument. In this regard, we pose the following problem:

\begin{problem}
Is the set of primes dense in $\mathbb{Z}[\sqrt{2}]\setminus\{0\}$ endowed with the coprimality topology?
\end{problem}
\subsection*{Acknowledgment}
The author is grateful for financial support in the form of Prime Minister’s Research Fellowship, Government of India (PMRF/2502403).

\bibliographystyle{plain} 
\bibliography{references}
\end{document}